\newcommand{\be}{\begin{eqnarray}}
\newcommand{\e}{\end{eqnarray}}
\newcommand{\bes}{\begin{eqnarray*}}
\newcommand{\es}{\end{eqnarray*}}
\newcommand{\beq}{\begin{equation}}
\newcommand{\eeq}{\end{equation}}
\newtheorem{remark}{Remark}
\newtheorem{assumption}{Assumption}
\author{Peter Benner, Akwum Onwunta \thanks{Corresponding author}, and Martin Stoll\\
Max Planck Institute for Dynamics of Complex Technical Systems, Sandtorstrasse 1, 39106 Magdeburg, Germany }
\begin{document}
\title{On the existence and uniqueness of the solution of a parabolic optimal
control problem with uncertain inputs
\thanks{This work is partially funded by the U. S. Department of Energy Office of Advanced Scientific Computing Research, 
Applied Mathematics program, under Award Number DE-SC0009301.}}
\author{Peter Benner\footnotemark[1], 
Akwum Onwunta\footnotemark[2] 
\and Martin Stoll\footnotemark[3]}
\renewcommand{\thefootnote}{\fnsymbol{footnote}}
\footnotetext[1]{Computational Methods in Systems and Control Theory, Max Planck Institute
for Dynamics of Complex Technical Systems,
Sandtorstrasse 1,
39106 Magdeburg,
Germany,
(\url{benner@mpi-magdeburg.mpg.de})}
\footnotetext[2]{Corresponding author; Department of Computer Science,
University of Maryland, College Park,
MD 20742, USA,
(\url{onwunta@cs.umd.edu}).}
\footnotetext[3]{
Technische Universit\"{a}t Chemnitz, Faculty of Mathematics,
Professorship Scientific Computing, 09107 Chemnitz, Germany,
(\url{martin.stoll@mathematik.tu-chemnitz.de}).}
\renewcommand{\thefootnote}{\arabic{footnote}}
\maketitle

\begin{abstract}
In this note, we consider the existence and uniqueness of the solution of a time-dependent optimal control problem 
constrained by a partial differential equation with uncertain inputs. 
Relying on the Lions' Lemma for deterministic  problems, we furthermore characterize the optimal control of the
stochastic problem.
\end{abstract}

\begin{keywords}
PDE-constrained optimization, stochastic optimal control, Lions' lemma. \\
\end{keywords}

\begin{AMS}35R60, 60H15, 60H35, 65N22, 65F10, 65F50\end{AMS}

\pagestyle{myheadings}
\thispagestyle{plain}
\markboth{Existence and uniqueness results for stochastic optimal control problems}{Existence and uniqueness results for stochastic optimal control problems}
\section{Introduction}
\label{sec1}

This work is essentially concerned with the theoretical analysis of the existence and uniqueness of a parabolic stochastic  optimal control problem 
(SOCP) considered in an earlier paper \cite{BOS15a} by the authors.
In \cite{BOS15a}, we propose efficient low-rank iterative solvers 
 for solving the linear systems resulting from the stochastic Galerkin
 finite element method discretization of the SOCP. 
From a computational point of view, the work of Rosseel and Wells in \cite{EW12} 
is directly related to the problem considered herein. However, the paper \cite{EW12} treats only stationary SOCP and, as \cite{BOS15a},
does not deal with the theoretical issues
of the existence and uniqueness of the solution of the considered problem.
So, one goal of this paper is to close this apparent gap in the literature 
regarding the  existence and uniqueness of SOCPs.
Unlike other related literature on SOCPs
(see e.g. \cite{HLM11} and the references therein), a special 
feature of the parabolic SOCP considered herein (as well as in  \cite{BOS15a, AO16, EW12})
is the presence of the  standard deviation
of the state variable as a risk measure in the cost functional.

In this work, we  first establish the  existence and uniqueness of the solution of the 
 parabolic SOCP. Next, we 
rely mainly on the Lions' Lemma \cite{Lions71} to characterize the optimal control of the stochastic problem.
 However, we proceed first to  Section \ref{probst} to provide the mathematical setting for the considered problem and recall Lions'
Lemma. The main results are stated and proved in Section \ref{result}.

\section{Problem statement}
\label{probst}
 In this note, we study the existence and uniqueness of the solution of a parabolic optimal control problem with stochastic inputs (SOCP). 
Before we proceed to state our problem, we first fix some notation that we will use in the sequel. To that end, 
let $\mathcal{D}\subset \mathbb{R}^d$ with $d\in\{1,2,3\}$ be a domain with Lipschitz boundary
$\partial \mathcal{D}.$ Moreover,  for $T>0,$ we denote the time interval by $[0,T].$ We recall that by  a random field $z: \mathcal{D}\times \Omega \rightarrow \mathbb{R},$
 we mean that $z(x,\cdot)$ is a random variable defined on  the complete probability space $(\Omega,\mathcal{F},{\mathbb{P}})$ for each  $x\in \mathcal{D}.$
Here, $\Omega$ is the set of outcomes, $\mathcal{F}\subset 2^{\Omega}$ is the $\sigma$-algebra of events, and $\mathbb{P}:\mathcal{F}\rightarrow [0,1]$ is an appropriate
probability measure. We consider only time-dependent random fields and assume that they are in the  tensor product Hilbert space $L^2(0,T;L^2(\mathcal{D}))\otimes L^2(\Omega)$ which is endowed with the norm
\[
||\upsilon||_{L^2(0,T;\mathcal{D})\otimes L^2(\Omega)}:=\left(\int_\Omega ||\upsilon(\cdot,\cdot,\omega)||^2_{L^2(0,T;L^2(\mathcal{D}))} \;d\mathbb{P}(\omega)\right)^{\frac{1}{2}} <\infty, 
\]
where ${L}^2(\Omega):={L}^2(\Omega,\mathcal{F},{\mathbb{P}}).$
 For any random variable $z$ defined on  $(\Omega,\mathcal{F},{\mathbb{P}}),$
the standard deviation $\mbox{std}(z)$  and the mean $\mathbb{E}(z)$ of $z$ are given, respectively,  by
\be
\label{stdy}
\mbox{std}(z) = \left[\int_\Omega (z - \mathbb{E}(z))^2 \;d\mathbb{P}(\omega)\right]^{\frac{1}{2}} \;\;\mbox{and}\;\;
\mathbb{E}(z)=\int_\Omega z\;d\mathbb{P}(\omega)<\infty.
\e
It is pertinent here to recall also that  the variance $\mbox{var}(z)$ of $z$ is given by
\be
\label{varnew}
\mbox{var}(z)= (\mbox{std}(z))^2 = \mathbb{E}(z^2) - (\mathbb{E}(z))^2.
\e

In what follows, we write $\mathbb{P}$-a.s to mean  $\mathbb{P}$-almost surely.
Next, we set    
$\mathcal{X}:= L^2(0,T;H^1_0(\mathcal{D}))\otimes L^2(\Omega) $   and  let
$\partial_t y\in L^2(0,T;H^{-1}(\mathcal{D}))\otimes L^2(\Omega).$ Also, let
$\mathcal{U}:= L^2(0,T;L^2(\mathcal{D}))\otimes L^2(\Omega)  $ 
be the control space and set $\mathcal{Y}=\mathcal{U},$ where $\mathcal{Y}$ is the state space.
Moreover, let $\mathcal{W}:= L^2(0,T; L^2(\mathcal{D})).$
 Note then that $\mathcal{W}, \; \mathcal{X} \subset L^2(0,T;L^2(\mathcal{D}))\otimes L^2(\Omega) =\mathcal{Y}.$
Finally, we set the Hilbert space
 $\mathcal{V}:=L^2(\mathcal{D})\otimes L^2(\Omega)$ and let $\mathcal{V}^{\prime}$ be the dual of 
$\mathcal{V}.$

 In  this work,  we shall focus on distributed control problems, although we believe that our discussion generalizes to 
boundary control problems. More explicitly,  we formulate our model problems as: 
\begin{eqnarray}
\min_{u\in  \mathcal{U}_{ad}}\mathcal{J}(y,u) &:=& \frac{1}{2}||y- y_d||^2_{\mathcal{Y}}
+ \frac{\alpha}{2}||\mbox{std}(y)||^2_{\mathcal{W}}
+ \frac{\beta}{2}||u||^2_{\mathcal{U}} \label{Junst}
\end{eqnarray}
subject, $\mathbb{P}$-a.s, to
 \be
 \label{parmodelprob}
\left\{
\begin{aligned}
    \frac{\partial y(t,x,\omega)}{\partial  t}  + A(x,\omega)y(t,x,\omega)) &= u(t,x,\omega),
       \;\;\mbox{in} \; \; (0,T]\times \mathcal{D}\times  \Omega,  \quad\\
    y(t,x,\omega) &= 0, \;\;\mbox{on} \;\; (0,T]\times \partial\mathcal{D}\times   \Omega, \quad\\
     y(0,x,\omega) &= 0, \;\;\mbox{in} \; \; \mathcal{D}\times\Omega,
\end{aligned}
\right.
\e
where $A:\mathcal{V} \rightarrow \mathcal{V}^{\prime}$ is a linear operator
that contains some random parameters.
Moreover, $\mathcal{J}$ is a cost functional of tracking-type, which
includes a risk penalization via the standard deviation. The functions  $y,u$  and $y_d$ are, in general, real-valued  functions 
representing, respectively, the state,  the  control and
the prescribed target system response (or desired state). 
Without loss of generality, we assume that  the state  $y\in \mathcal{Y}$ and the  control  function $u\in\mathcal{U}$ are random fields 
while the desired state  $y_d \in \mathcal{Y}$ is modeled deterministically.
The  constant  $\beta > 0$  in (\ref{Junst})
 represents the parameter for the penalization of the action of the control 
 $u,$ whereas  $\alpha\geq 0$ is the so-called {\it risk-aversion} parameter that penalizes the standard deviation $\mbox{std}(y)$ of the state  $y.$  
 The objective functional $\mathcal{J} $ is a
 deterministic quantity with uncertain terms.  The set   $  \mathcal{U}_{ad}$ is the so-called convex admissible set, and is given by
 \be
 \label{uad}
  \mathcal{U}_{ad}:=  \{ u\in \mathcal{U}:  u(t,x,\omega)\geq 0 \;\; \mathbb{P}{\mbox{-a.s}} \;\; {\mbox{in}} \;\; [0,T]\times \mathcal{D}\times  \Omega\}.
 \e
 We shall need the following assumptions on $A$ in the sequel.
 
\newpage
\begin{assumption}
 \label{assmp1}
 \begin{enumerate}
 \item [(a)] $A$ is coercive: there  exist  constants $c>0$ and $\theta$ such that, $\mathbb{P}$-a.s,
 \[
 \left(A  v, v\right) + \theta ||v||^2_\mathcal{H} \geq c ||v||_\mathcal{V}, \;\; \forall v\in \mathcal{V}, 
 \]
 where  the space $\mathcal{H}$ is chosen such that $\mathcal{V}\subset \mathcal{H} \subset \mathcal{V}^{\prime}$ and
$\mathcal{V}$ is dense in $\mathcal{H}.$
  \item [(b)] $A$ is self-adjoint: $\left(A u, v\right) = \left( u, A^{\star}v\right), \;\;\; \forall u,v\in \mathcal{V},$ \;  $\mathbb{P}$-a.s.
\end{enumerate}
\end{assumption}

A prominent example of the operator $A$ is the diffusion operator considered, for instance, in 
\cite{BOS15a, EW12}:
\be
\label{diffeq}
A:= -\nabla\cdot a(x,\omega)\nabla,  
\e
in which case we assume that the random field  $a(x,\omega)$ is
uniformly positive in $\mathcal{D}\times\Omega.$ That is,
 there  exist strictly positive constants $a_{\min}$ and $a_{\max},$ with
 $a_{\min}\leq a_{\max},$ such that
\[
\mathbb{P}\left( \omega\in \Omega: a(x,\omega)\in [a_{\min}, a_{\max}],\, \forall x\in\mathcal{D}  \right)=1.
\]
The weak formulation of the SOCP  (\ref{Junst}) and (\ref{parmodelprob})
above is  given by
%

 \begin{eqnarray}
 \min_{u\in   \mathcal{U}_{ad}}\mathcal{J}(u)
  &=&\frac{1}{2}\mathbb{E}\int_0^T\int_{\mathcal{D}} (y(u)- y_d)^2 \;dxdt 
   +  \frac{\alpha}{2} \mathbb{E}\int_0^T\int_{\mathcal{D}}\left[ y(u) - (\mathbb{E} y(u) ) \right]^2 \;dxdt  \nonumber\\
  && +  \frac{\beta}{2} \mathbb{E}\int_0^T\int_{\mathcal{D}} u^2  \;dxdt  \label{varj}
\end{eqnarray}
subject, $\mathbb{P}$-a.s, to
 \begin{eqnarray}
\label{varformula}
\mathbb{E}\int_0^T\int_{\mathcal{D}}  \partial_t y v \;dxdt + \mathcal{B}(y,v) &=&
 \mathbb{E}\int_0^T\int_{\mathcal{D}} u v \;dxdt, \;\;\; v\in H^1_0(\mathcal{D})\otimes L^2(\Omega),
\end{eqnarray}
where $\mathcal{B}$ is a bilinear form  of the operator $A$ defined on the 
tensor product space $H^1_0(\mathcal{D})\otimes L^2(\Omega)$.
In particular, if $A= -\nabla\cdot a(x,\omega)\nabla,$ then
\[
 \mathcal{B}(y,v) := \mathbb{E}\int_0^T\int_{\mathcal{D}} a \nabla y\cdot \nabla v \;dxdt.
\]

Next, we proceed to Section \ref{result} to establish our existence and uniqueness results for 
the parabolic SOCP (\ref{Junst}) -- (\ref{parmodelprob}).

\section{Existence and uniqueness results}
\label{result}
In our subsequent discussion,  we shall rely 
explicitly on the following definition. 
\begin{definition}
 A function $\bar{u}\in \mathcal{U}_{ad}$ is called an optimal control and 
  ${\bar{y}} = y(\bar{u})$ the associated optimal state corresponding to the 
the SOCP  (\ref{Junst}) and (\ref{parmodelprob}) if, $\mathbb{P}$-a.s, the following
expression holds:
\[
\mathcal{J}(\bar{y},\bar{u}) \leq \mathcal{J}(y(u),u), \quad \;\; \forall u\in \mathcal{U}_{ad}. 
\]
\end{definition}


We can now state the following result.
\begin{theorem}
\label{exun1}
Let $\{ \mathcal{H}_1, ||\cdot|| \}$ and $\{ \mathcal{H}_2, ||\cdot|| \}$ be Hilbert spaces. Suppose that 
$\tilde{\mathcal{H}}_1\subset \mathcal{H}_1$ is a non-empty, closed and convex set. Let
 $y_d\in \mathcal{H}_1$  and the  constants $\gamma, \eta \geq 0$ be given.
Furthermore,
let $S: \mathcal{H}_1 \mapsto\mathcal{H}_2$ be a continuous linear operator. Then,
the quadratic  Hilbert space optimization problem
\[
 \min_{u\in  \tilde{\mathcal{H}}_1 } f(u) = \frac{1}{2}||Su - y_d||^2_{\mathcal{H}_2}
 + \frac{\gamma}{2}||\mbox{std}(Su)||^2_{\mathcal{H}_2}
 + \frac{\eta}{2}||u||^2_{\mathcal{H}_1} 
\]
admits, $\mathbb{P}$-a.s,  an optimal solution ${\bar{u}}\in \tilde{\mathcal{H}}_1$. If $\eta >0$, 
then ${\bar{u}}$ is uniquely determined.
\end{theorem}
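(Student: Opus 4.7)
The plan is to invoke the direct method of the calculus of variations, exploiting reflexivity of $\mathcal{H}_1$ together with the convex quadratic structure of $f$.

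Since $f\ge 0$, the value $J^{\star}:=\inf_{u\in\tilde{\mathcal{H}}_1}f(u)$ is a finite nonnegative real number. I would pick a minimizing sequence $\{u_n\}\subset\tilde{\mathcal{H}}_1$ with $f(u_n)\to J^{\star}$. Focusing on the case $\eta>0$ (where boundedness is immediate), the estimate $\tfrac{\eta}{2}\|u_n\|_{\mathcal{H}_1}^2\le f(u_n)$ shows that $\{u_n\}$ is bounded in $\mathcal{H}_1$, so by reflexivity there is a subsequence $u_{n_k}\rightharpoonup\bar{u}$ converging weakly. Since $\tilde{\mathcal{H}}_1$ is convex and strongly closed, Mazur's theorem gives that it is also weakly closed, and hence $\bar{u}\in\tilde{\mathcal{H}}_1$.

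Next, I would verify that $f$ is convex and strongly continuous on $\mathcal{H}_1$, which is enough to imply weak lower semicontinuity. The term $\tfrac{\eta}{2}\|u\|_{\mathcal{H}_1}^2$ is convex and continuous by definition. Continuity of $S$ makes $u\mapsto Su-y_d$ affine continuous, so $\tfrac{1}{2}\|Su-y_d\|_{\mathcal{H}_2}^2$ is convex and continuous in $u$. For the variance term, I would use the identity $\mbox{std}(z)^2=\mathbb{E}[(z-\mathbb{E}z)^2]$ to rewrite
\[
\|\mbox{std}(Su)\|_{\mathcal{H}_2}^2=\|Su-\mathbb{E}[Su]\|^2,
\]
which is the squared norm of an affine continuous image of $u$, the expectation being a bounded linear projection onto the deterministic subspace; hence this piece is likewise convex and continuous. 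Summing the three convex continuous pieces, $f$ itself is convex and continuous, therefore weakly lower semicontinuous, and so $f(\bar{u})\le\liminf_{k\to\infty}f(u_{n_k})=J^{\star}$, which proves that $\bar{u}$ attains the infimum.

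For uniqueness when $\eta>0$, the term $\tfrac{\eta}{2}\|u\|^2$ is \emph{strictly} convex while the other two are convex, so $f$ is strictly convex on the convex set $\tilde{\mathcal{H}}_1$; the standard midpoint argument then rules out two distinct minimizers. The only genuinely delicate step is the handling of the standard-deviation term, but the identity above reduces it to an affine-quadratic functional in $u$, after which convexity and lower semicontinuity follow routinely, so no deeper obstruction stands in the way of the proof.
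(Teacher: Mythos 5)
Your proposal is correct and follows essentially the same route as the paper, which simply defers to the standard direct-method argument of \cite[Theorem 2.14]{FT10}: minimizing sequence, weak compactness, weak closedness of the convex closed set via Mazur, weak lower semicontinuity of the convex continuous functional, and strict convexity of the Tikhonov term for uniqueness; your explicit reduction of the standard-deviation term to $\|(I-\mathbb{E})Su\|^2$, a continuous linear image of $u$, is exactly the right way to fold it into that framework. The only point to note is that you treat only $\eta>0$, whereas the theorem as stated also claims existence for $\eta=0$ -- which, with $\tilde{\mathcal{H}}_1$ unbounded, would require an additional boundedness or coercivity hypothesis (as in the cited reference); since the application takes $\beta>0$, this does not affect the paper's use of the result.
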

\begin{proof}
 Note first that the function $f(u)\geq 0$ is continuous and convex. The proof of Theorem \ref{exun1}
 therefore follows analogously to that of \cite[Theorem 2.14]{FT10}.
\end{proof}

Suppose now that we set $\tilde{\mathcal{H}}_1 =  \mathcal{U}_{ad}, \;\; \mathcal{H}_1= \mathcal{U}$
and $\mathcal{H}_2 = \mathcal{Y}.$  
Observe then that, by \cite[Theorems 3.12, 3.13]{FT10}, for any $u\in \mathcal{U},$ there exists 
a unique solution to the parabolic initial-boundary value problem (\ref{parmodelprob}).
Now, let the mapping 
\[
G_{\mathcal{U}}: \mathcal{U} \mapsto \mathcal{X} \subset \mathcal{Y},\; u \mapsto y(u) 
\]
be the so-called 
control-to-state operator. Observe then  that $G_{\mathcal{U}}$ is a continuous 
linear operator \cite[pp. 50]{FT10}.  Moreover, let $E_{\mathcal{Y}}: \mathcal{X} \mapsto \mathcal{Y}$ denote the 
embedding operator that assigns to each $y\in  \mathcal{X}\subset \mathcal{Y}$ the same function in 
$\mathcal{Y} .$ Note that $E_{\mathcal{Y}}$ is also linear and continuous. Thus,
the composition  $ u \mapsto y(u) \mapsto  y(u) $
is a continuous linear operator $S:=E_{\mathcal{Y}}G_{\mathcal{U}}: u \mapsto y(u). $ 
Hence, it turns out that if we substitute $S$ into the cost functional $\mathcal{J}(y,u)$ in
(\ref{Junst}), then we eliminate the parabolic initial-boundary value problem to arrive at the
following quadratic minimization  problem in the Hilbert space $\mathcal{U}$:
 \begin{eqnarray}
 \min_{u\in   \mathcal{U}_{ad}}\mathcal{J}(u)
  &=&\frac{1}{2}\mathbb{E}\int_0^T\int_{\mathcal{D}} (Su- y_d)^2 \;dxdt 
   +  \frac{\alpha}{2} \mathbb{E}\int_0^T\int_{\mathcal{D}}\left[ Su - (\mathbb{E} Su ) \right]^2 \;dxdt  \nonumber\\
  && +  \frac{\beta}{2} \mathbb{E}\int_0^T\int_{\mathcal{D}} u^2  \;dxdt.  \label{varj2}
\end{eqnarray}

Furthermore, note  that  $\mathcal{U}_{ad}$ as defined by (\ref{uad}) is indeed  non-empty, closed and convex.
Thus,  we can infer from Theorem \ref{exun1} that there exists an optimal control ${\bar{u}}$ 
for the  problem (\ref{varj2}).
Moreover, by our initial assumption the regularization parameter $\beta >0.$
Hence, we have  indeed established the following result.
\begin{theorem}
\label{exun}
Under the assumptions of Theorem \ref{exun1}, the SOCP (\ref{Junst}) -- (\ref{parmodelprob})
has at least one optimal control ${\bar{u}}\in \mathcal{U}_{ad}$. If $\beta >0$, 
then ${\bar{u}}$ is uniquely determined.
\end{theorem}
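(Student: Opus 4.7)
The plan is to reduce Theorem \ref{exun} directly to the abstract Hilbert-space result of Theorem \ref{exun1} by eliminating the PDE constraint through the control-to-state operator, exactly along the lines already sketched in the passage preceding the statement. First I would verify that the admissible set $\mathcal{U}_{ad}$ defined in (\ref{uad}) satisfies the hypotheses of Theorem \ref{exun1}: it is non-empty (the zero control belongs to it), convex as the set of functions satisfying a linear pointwise inequality $\mathbb{P}$-a.s., and closed in $\mathcal{U}$ since $L^2$-convergence preserves the sign constraint along a suitable a.e.\ convergent subsequence.

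Next I would establish well-posedness of the state equation (\ref{parmodelprob}) for every $u \in \mathcal{U}$. For $\mathbb{P}$-a.e.\ fixed $\omega$, Assumption \ref{assmp1} gives coercivity (and self-adjointness) of $A(\cdot,\omega)$, so the classical parabolic theory cited in \cite[Theorems 3.12, 3.13]{FT10} yields a unique pathwise solution $y(\cdot,\cdot,\omega)\in L^2(0,T;H^1_0(\mathcal{D}))$ with $\partial_t y\in L^2(0,T;H^{-1}(\mathcal{D}))$ together with an $\omega$-uniform energy estimate. Measurability in $\omega$ and Fubini allow these pathwise solutions to assemble into a single element $y(u)\in\mathcal{X}$, and the energy bound transfers, after integration over $\Omega$, to continuity of the control-to-state map $G_{\mathcal{U}}:\mathcal{U}\to\mathcal{X}$, $u\mapsto y(u)$, whose linearity is inherited from the linearity of the state equation.

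Composing with the continuous embedding $E_{\mathcal{Y}}:\mathcal{X}\hookrightarrow\mathcal{Y}$ yields a continuous linear operator $S := E_{\mathcal{Y}} G_{\mathcal{U}}:\mathcal{U}\to\mathcal{Y}$. Substituting $y(u)=Su$ into $\mathcal{J}$ recasts (\ref{Junst})--(\ref{parmodelprob}) as the unconstrained quadratic Hilbert-space optimization (\ref{varj2}) over $\mathcal{U}_{ad}$. We are then precisely in the setting of Theorem \ref{exun1} with $\mathcal{H}_1=\mathcal{U}$, $\mathcal{H}_2=\mathcal{Y}$, $\tilde{\mathcal{H}}_1=\mathcal{U}_{ad}$, $\gamma=\alpha$, and $\eta=\beta$. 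Existence of an optimal $\bar{u}\in\mathcal{U}_{ad}$ follows immediately, and since $\beta>0$ is assumed throughout, Theorem \ref{exun1} also gives uniqueness.

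The main obstacle in this plan is the transfer from pathwise well-posedness to a continuous linear map $G_{\mathcal{U}}$ into the tensor-product space $\mathcal{X}$: one needs the coercivity constant in Assumption \ref{assmp1}(a) to be uniform in $\omega$ (this is where the uniform positivity $a_{\min}>0$ in the diffusion example is essential) and the pathwise solutions to be jointly measurable so that the energy estimate may be integrated over $\Omega$. Once this is secured, everything else amounts to matching notation with Theorem \ref{exun1} and invoking it.
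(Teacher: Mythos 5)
Your proposal follows essentially the same route as the paper: identify $\mathcal{H}_1=\mathcal{U}$, $\mathcal{H}_2=\mathcal{Y}$, $\tilde{\mathcal{H}}_1=\mathcal{U}_{ad}$, eliminate the PDE constraint via the continuous linear control-to-state operator $S=E_{\mathcal{Y}}G_{\mathcal{U}}$, check that $\mathcal{U}_{ad}$ is non-empty, closed and convex, and invoke Theorem \ref{exun1}. The only difference is that you spell out the pathwise well-posedness, measurability, and $\omega$-uniform coercivity needed to make $G_{\mathcal{U}}$ a continuous map into the tensor-product space, details the paper delegates to the citations of \cite{FT10}; this is a correct and in fact welcome refinement, not a different argument.
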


Although Theorem \ref{exun} establishes the existence and uniqueness of a solution to
the optimal control problem (\ref{Junst}) -- (\ref{parmodelprob}),
it is not constructive in the sense that the
theorem provides no indication as to how this solution can be obtained. In the
 following, we will address this issue and provide a characterization of the
optimal control. To this end, 
a chief corner stone in our subsequent discussion in this contribution is the following 
result in the deterministic framework, which 
is often known as the Lions' lemma \cite[p.~10]{Lions71}.
\begin{theorem}\cite[Theorem 1.3]{Lions71}
\label{lions}
Suppose the cost functional $v\mapsto \mathcal{J}(v)$ is strictly convex and differentiable.
Then, there exists a unique optimal  control $\bar{u}\in  \mathcal{U}_{ad}$   if and only if
\be
\label{keyeqn}
\mathcal{J}^\prime (\bar{u}) \cdot (v - \bar{u}) \geq 0, \;\;\;\; \forall v\in  \mathcal{U}_{ad},
\e
where 
\be
\label{derivj}
 \mathcal{J}^\prime (\bar{u}) \cdot (w) := \lim_{h\rightarrow 0}\frac{\mathcal{J}(\bar{u}+hw)- \mathcal{J}(\bar{u})}{h},
\e
is the derivative of $\mathcal{J}$ with respect to $u$ in the direction of $w.$
\end{theorem}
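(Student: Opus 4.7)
The plan is a standard variational-inequality argument that exploits the convexity of $\mathcal{U}_{ad}$ on the constraint side and the strict convexity together with differentiability of $\mathcal{J}$ on the objective side. Each implication reduces to examining the scalar function $t \mapsto \mathcal{J}(\bar{u} + t(v-\bar{u}))$ along admissible line segments, so no functional-analytic machinery beyond (\ref{derivj}) is needed.

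For the necessity direction ($\bar{u}$ optimal $\Rightarrow$ (\ref{keyeqn})), I would fix an arbitrary $v \in \mathcal{U}_{ad}$ and use that, by convexity of $\mathcal{U}_{ad}$, the point $\bar{u} + t(v-\bar{u}) = (1-t)\bar{u} + tv$ lies in $\mathcal{U}_{ad}$ for every $t \in [0,1]$. Optimality of $\bar{u}$ then yields
\[
\frac{\mathcal{J}(\bar{u}+t(v-\bar{u})) - \mathcal{J}(\bar{u})}{t} \geq 0 \qquad \forall\, t \in (0,1],
\]
and passing to the limit $t \downarrow 0$ via (\ref{derivj}) delivers (\ref{keyeqn}).

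For the sufficiency direction, I would first derive the standard first-order (subgradient) inequality
\[
\mathcal{J}(v) \geq \mathcal{J}(\bar{u}) + \mathcal{J}^\prime(\bar{u}) \cdot (v-\bar{u}) \qquad \forall\, v \in \mathcal{U}_{ad},
\]
by dividing the convexity estimate $\mathcal{J}((1-t)\bar{u}+tv) \leq (1-t)\mathcal{J}(\bar{u}) + t\mathcal{J}(v)$ by $t > 0$ and letting $t \downarrow 0$. Strict convexity upgrades this to a strict inequality whenever $v \neq \bar{u}$. Combining with the assumed variational inequality (\ref{keyeqn}) gives $\mathcal{J}(v) > \mathcal{J}(\bar{u})$ for every $v \in \mathcal{U}_{ad} \setminus \{\bar{u}\}$, so $\bar{u}$ is both optimal and the unique minimizer.

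The only genuinely delicate step is the passage from the one-sided Gateaux derivative (\ref{derivj}) to the two-sided subgradient inequality in the sufficiency argument, and the verification that the strict form really persists under strict convexity rather than mere convexity. This can be handled by observing that if equality held in the subgradient inequality for some $v \neq \bar{u}$, then the convex map $t \mapsto \mathcal{J}((1-t)\bar{u} + tv)$ would be affine on $[0,1]$, contradicting strict convexity. Beyond this, everything reduces to routine one-dimensional calculus along admissible segments, which is why the argument is essentially a direct transcription of Lions' original proof in \cite{Lions71}.
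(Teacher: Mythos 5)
Your argument is correct and is the classical one: the paper itself gives no proof of this theorem (it is imported verbatim from Lions, Theorem~1.3), and your two-sided variational-inequality argument --- difference quotients along the admissible segment $(1-t)\bar{u}+tv$ for necessity, the subgradient inequality sharpened by strict convexity for sufficiency and uniqueness --- is exactly the standard proof one finds there. The only point worth noting is that, read literally, the statement's ``existence'' clause does not follow from strict convexity and differentiability alone; like you, the paper treats the theorem purely as a characterization of a given $\bar{u}$ and establishes existence separately (Theorems~\ref{exun1}--\ref{exun}), so your reading is the right one.
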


Note that it is very easy to check that the cost functional $\mathcal{J}$ in (\ref{Junst}) is strictly convex.
We can now prove the following  characterization result for the optimal control.
\begin{theorem}
 The SOCP  given  by (\ref{Junst}) -- (\ref{parmodelprob}) has a unique solution $(y,u)$ if and only if there exists 
 a co-state variable $\lambda\in \mathcal{Y}$
such that the triplet $(y,u,\lambda)$ satisfies, $\mathbb{P}$-a.s , the following optimality system:
\begin{eqnarray}
 \frac{\partial y(u)}{\partial  t} + A  y(u)  &= & u ,\label{pde}\\
 y(u)\mid_{t=0 }&=& 0, \;\;\; y(u)\in \mathcal{Y}.\label{pde1}
\end{eqnarray}
\begin{eqnarray}
 -\frac{\partial \lambda(u)}{\partial t} + A^\ast   \lambda(u) 
 &=&  (1+\alpha)y(u) - \alpha \mathbb{E}( y(u))  - y_d,\label{starrnew}\\
 \lambda(u)\mid_{t=T }&=& 0, \;\;\; \lambda(u)\in \mathcal{Y}. \label{starrnew1}
\end{eqnarray}
\begin{eqnarray}
\mathbb{E}\int_0^T\int_{\mathcal{D}} (\lambda(u) + \beta u)\cdot (v-u) \;dxdt \geq 0, \;\;\; u,v\in \mathcal{U}_{ad}.
\end{eqnarray}
\end{theorem}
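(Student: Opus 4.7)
The plan is to apply Lions' Lemma (Theorem \ref{lions}) and reduce the characterization to an explicit computation of the directional derivative of $\mathcal{J}$, which is then rewritten by introducing a backward adjoint PDE. First, I would note that $\mathcal{J}$ is a continuous quadratic functional on $\mathcal{U}$, strictly convex (since $\beta>0$) and differentiable in the sense of (\ref{derivj}), so Theorem \ref{lions} applies: the existence and uniqueness of the optimal control is equivalent to the variational inequality (\ref{keyeqn}). Thus the task reduces to showing that (\ref{keyeqn}) is equivalent to the existence of a co-state $\lambda$ satisfying (\ref{starrnew})--(\ref{starrnew1}) together with the stated integral inequality.

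Using the linearity of the control-to-state operator $G_{\mathcal{U}}\colon u\mapsto y(u)$, for any direction $w\in\mathcal{U}$ the sensitivity $y(w)$ solves (\ref{parmodelprob}) with right-hand side $w$ and zero initial data, so $y(u+hw)=y(u)+h\,y(w)$. Expanding $\mathcal{J}(u+hw)$ from (\ref{Junst}) and passing to the limit $h\to 0$ yields
\begin{eqnarray*}
\mathcal{J}'(u)\cdot w &=& \mathbb{E}\!\int_0^T\!\!\int_{\mathcal{D}}(y(u)-y_d)\,y(w)\,dxdt \\
&& +\ \alpha\,\mathbb{E}\!\int_0^T\!\!\int_{\mathcal{D}}(y(u)-\mathbb{E}y(u))(y(w)-\mathbb{E}y(w))\,dxdt \\
&& +\ \beta\,\mathbb{E}\!\int_0^T\!\!\int_{\mathcal{D}} u\,w\,dxdt.
\end{eqnarray*}
The variance contribution is then simplified via the elementary identity $\mathbb{E}[(y-\mathbb{E}y)(z-\mathbb{E}z)]=\mathbb{E}[(y-\mathbb{E}y)z]$, which merges the first two summands into the single term $\mathbb{E}\!\int_0^T\!\int_{\mathcal{D}}[(1+\alpha)y(u)-\alpha\,\mathbb{E}y(u)-y_d]\,y(w)\,dxdt$; this is precisely the source term on the right-hand side of (\ref{starrnew}).

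The key step is to introduce $\lambda\in\mathcal{Y}$ as the unique solution of the backward problem (\ref{starrnew})--(\ref{starrnew1}), whose well-posedness follows from the forward theory (Theorems \ref{exun1}, \ref{exun}) applied to the time-reversed equation together with the self-adjointness of $A$ provided by Assumption \ref{assmp1}(b). Testing the state equation for $y(w)$ against $\lambda$, testing (\ref{starrnew}) against $y(w)$, subtracting, and integrating by parts in time (the boundary terms vanish by $y(w)|_{t=0}=0$ and $\lambda|_{t=T}=0$) produces the duality identity
\begin{eqnarray*}
\mathbb{E}\!\int_0^T\!\!\int_{\mathcal{D}}[(1+\alpha)y(u)-\alpha\,\mathbb{E}y(u)-y_d]\,y(w)\,dxdt &=& \mathbb{E}\!\int_0^T\!\!\int_{\mathcal{D}}\lambda\,w\,dxdt.
\end{eqnarray*}
Substituting this back yields $\mathcal{J}'(u)\cdot w=\mathbb{E}\!\int_0^T\!\!\int_{\mathcal{D}}(\lambda+\beta u)\,w\,dxdt$.

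Taking $w=v-u$ with $v\in\mathcal{U}_{ad}$ turns Lions' inequality (\ref{keyeqn}) into the last line of the claimed optimality system, while (\ref{pde})--(\ref{pde1}) and (\ref{starrnew})--(\ref{starrnew1}) hold by construction of $y(u)$ and $\lambda$. The converse is immediate by running the computation backwards: any triplet $(y,u,\lambda)$ satisfying the optimality system yields $\mathcal{J}'(u)\cdot(v-u)\ge 0$, and Theorem \ref{lions} then identifies $u$ with the unique minimizer. I expect the main technical obstacle to be the rigorous justification of the integration by parts in time, since $y(w)$ and $\lambda$ only possess $L^2$-in-time regularity with weak derivatives in $L^2(0,T;H^{-1}(\mathcal{D}))\otimes L^2(\Omega)$; this requires the standard Lions--Magenes trace theorem, and one must verify that it commutes with the expectation, which is guaranteed by the tensor-product Hilbert space structure assumed in Section \ref{probst}.
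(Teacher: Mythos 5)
Your proposal is correct and follows essentially the same route as the paper: Lions' Lemma, an explicit computation of the directional derivative of $\mathcal{J}$ (your covariance identity $\mathbb{E}[(y-\mathbb{E}y)(z-\mathbb{E}z)]=\mathbb{E}[(y-\mathbb{E}y)z]$ yields exactly the paper's $\Psi(\alpha)$, which there comes from the decomposition of the variance into $\mathcal{J}_2-\mathcal{J}_3$), followed by the adjoint/duality argument with integration by parts in time. The only small quibble is attribution: the well-posedness of the backward adjoint problem should be traced to the forward parabolic theory (\cite{FT10}, as used for the state equation) applied to the time-reversed problem, not to Theorems \ref{exun1} and \ref{exun}, which concern the optimization problem rather than the PDE.
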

We note here that we have dropped the dependence on $(t,x,\omega)$ for notational convenience.
\begin{proof}
It suffices  to show that
 the condition (\ref{keyeqn}) in Theorem \ref{lions} holds. Now, using   (\ref{varnew}) and the fact that $y=y(u),$ note that (\ref{Junst}) can be re-written as
 \begin{eqnarray}
  \mathcal{J}(u) &:=& \mathcal{J}_1(u) + \mathcal{J}_2(u)  - \mathcal{J}_3(u)  +\mathcal{J}_4(u) \nonumber\\
  &=&\frac{1}{2}\mathbb{E}\int_0^T\int_{\mathcal{D}} (y(u)- y_d)^2 \;dxdt 
   +  \frac{\alpha}{2} \mathbb{E}\int_0^T\int_{\mathcal{D}} y(u)^2 \;dxdt  \nonumber\\
  &&- \frac{\alpha}{2}  \int_0^T\int_{\mathcal{D}} (\mathbb{E} y(u))^2 \;dxdt
   +  \frac{\beta}{2} \mathbb{E}\int_0^T\int_{\mathcal{D}} u^2  \;dxdt.  \label{newj}
\end{eqnarray}
Using the definition (\ref{derivj}), we find that 
\begin{eqnarray}
  \mathcal{J}_1^{\prime} (u) \cdot (v-u) &=&   \lim_{h\rightarrow 0}
  \frac{ \mathbb{E}\int_0^T\int_{\mathcal{D}} [y(u + h(v-u) ) - y_d]^2\;dxdt  - \mathbb{E}\int_0^T\int_{\mathcal{D}} [y(u) -y_d]^2\;dxdt}{2h}   \nonumber\\
  &=& \lim_{h\rightarrow 0}
  \frac{\mathbb{E}\int_0^T\int_{\mathcal{D}}y(u + h(v-u) )^2 - y(u)^2\;dxdt }{2h}\nonumber\\
  && - \lim_{h\rightarrow 0}  \frac{\mathbb{E}\int_0^T\int_{\mathcal{D}} 2y_d( y(u + h(v-u) ) - y(u))\;dxdt}{2h}   \nonumber\\
    &=& \mathbb{E}\int_0^T\int_{\mathcal{D}} y(u) y^{\prime}(u)\cdot (v-u) \;dxdt
         -   \mathbb{E}\int_0^T\int_{\mathcal{D}} y_d y^{\prime}(u)\cdot (v-u) \;dxdt \nonumber\\
   &=& \mathbb{E}\int_0^T\int_{\mathcal{D}} (y(u)- y_d) y^{\prime}(u)\cdot (v-u) \;dxdt, \nonumber
  \end{eqnarray}
and
\begin{eqnarray}
  \mathcal{J}_4^{\prime} (u) \cdot (v-u) &=& \beta\cdot  \lim_{h\rightarrow 0}
  \frac{ \mathbb{E}\int_0^T\int_{\mathcal{D}} (u + h(v-u) )^2\;dxdt  - \mathbb{E}\int_0^T\int_{\mathcal{D}} u^2\;dxdt}{2h}   \nonumber\\
  &=&\beta\cdot \lim_{h\rightarrow 0}
  \frac{\mathbb{E}\int_0^T\int_{\mathcal{D}}(u^2 + h^2(v-u)^2  + 2 hu(v-u) )\;dxdt - \mathbb{E}\int_0^T\int_{\mathcal{D}} u^2\;dxdt}{2h}   \nonumber\\
   &=&  \beta \mathbb{E}\int_0^T\int_{\mathcal{D}} u\cdot (v-u)  \;dxdt.  \nonumber
\end{eqnarray}
One can easily perform similar calculations with  the terms $\mathcal{J}_2 $ and $ \mathcal{J}_3$  in (\ref{newj}) to obtain (see e.g. \cite{EW12}), respectively,
\[
  \mathcal{J}_2^{\prime} (u) \cdot (v-u) = \alpha \mathbb{E}\int_0^T\int_{\mathcal{D}} y(u) y^{\prime}(u)\cdot (v-u) \;dxdt,
\]
and
\[
  \mathcal{J}_3^{\prime} (u) \cdot (v-u) = \alpha \mathbb{E}\int_0^T\int_{\mathcal{D}} (\mathbb{E} y(u)) y^{\prime}(u)\cdot (v-u) \;dxdt.
\]
Hence, we have
\begin{eqnarray}
  \mathcal{J}^{\prime} (u) \cdot (v-u) &=& \left[ \mathcal{J}_1^{\prime} (u) + \mathcal{J}_2^{\prime} (u) -\mathcal{J}_3^{\prime} (u)
  + \mathcal{J}_4^{\prime} (u) \right]\cdot (v-u) \nonumber\\
  &=& \mathbb{E}\int_0^T\int_{\mathcal{D}} (y(u)- y_d) y^{\prime}(u)\cdot (v-u) \;dxdt \nonumber\\
   && + \; \alpha \mathbb{E}\int_0^T\int_{\mathcal{D}} y(u) y^{\prime}(u)\cdot (v-u) \;dxdt  \nonumber\\
  && -\; \alpha \mathbb{E}\int_0^T\int_{\mathcal{D}} (\mathbb{E} y(u)) y^{\prime}(u)\cdot (v-u) \;dxdt \nonumber\\
   && +\;  \beta \mathbb{E}\int_0^T\int_{\mathcal{D}} u\cdot (v-u)  \;dxdt.  \label{juv}
\end{eqnarray}

Next, let $\mathcal{L}:=\frac{\partial }{\partial  t} + A.$ 
 Note then that  Assumption \ref{assmp1},  as given by $(a)$  and $(b)$ which are directly below equation (\ref{parmodelprob}), implies that  $\mathcal{L}$ is invertible  and, indeed, from (\ref{pde}) one then gets
\be
\label{oper}
 \mathcal{L} y(u) =u  \Longrightarrow y(u) =  \mathcal{L}^{-1}  u,
\e
so that, using (\ref{oper}), the quantity $ y^{\prime}(u)\cdot (v-u) $ appearing in the first three terms of the expression (\ref{juv}) now yields
\[
 y^{\prime}(u)\cdot (v-u)  =  \mathcal{L}^{-1}  (v-u) = \mathcal{L}^{-1} (v) - \mathcal{L}^{-1}  (u) = y(v) - y(u).
\]
Thus, we have from (\ref{juv}) that
\begin{eqnarray}
  \mathcal{J}^{\prime} (u) \cdot (v-u) &=& \Psi(\alpha)  + \beta \mathbb{E}\int_0^T\int_{\mathcal{D}} u\cdot (v-u)  \;dxdt,  \label{condj}
\end{eqnarray}
where 
\begin{eqnarray}
\Psi(\alpha)  &=&
(1+\alpha)\mathbb{E}\int_0^T\int_{\mathcal{D}} y(u)\cdot (y(v) - y(u)) \;dxdt \nonumber\\
  && -\; \alpha \mathbb{E}\int_0^T\int_{\mathcal{D}} \mathbb{E} (y(u)) \cdot (y(v) - y(u)) \;dxdt  \nonumber\\
 && -\; \mathbb{E}\int_0^T\int_{\mathcal{D}} y_d \cdot (y(v) - y(u)) \;dxdt.  \label{phi}
\end{eqnarray}
Observe, once again, that Lions' lemma demands that the expression (\ref{condj}) be non-negative to ensure 
the existence and uniqueness of the solution to the SOCP (\ref{Junst}) and (\ref{parmodelprob}).
To that end, we next introduce the adjoint state $\lambda(v)$ by
\begin{eqnarray}
 -\frac{\partial \lambda(v)}{\partial t} + A^\ast   \lambda(v) 
 &=&  (1+\alpha)y(v) - \alpha \mathbb{E}( y(v))  - y_d,\label{starr}\\
 \lambda(v)\mid_{T=0 }&=& 0, \;\;\; \lambda(v)\in \mathcal{Y}.
\end{eqnarray}
Now, set $v=u$ in (\ref{starr}) and   multiply both sides of the equation  by $y(v) - y(u)$. 
Observe first that, by taking the expectation of the resulting expression and integrating
it over $[0,T]$ and $\mathcal{D}$, these two operations essentially transform the right hand 
side of (\ref{starr}) to the expression $\Psi(\alpha)$ in (\ref{phi}). Moreover,
  the two terms
on the left hand side of (\ref{starr}) now read 
{\small 
\begin{eqnarray}
\mathbb{E}\int_0^T\int_{\mathcal{D}} -\frac{\partial \lambda(u)}{\partial t} \cdot (y(v) - y(u)) \;dxdt 
&=& \mathbb{E}\int_0^T\int_{\mathcal{D}}  \lambda(u) \cdot \left(\frac{\partial y(v)}{\partial t}  - \frac{\partial y(u)}{\partial t} \right) \;dxdt, \label{a}
\end{eqnarray}
}
{\small
\begin{eqnarray}
\mathbb{E}\int_0^T\int_{\mathcal{D}} A^\ast  \lambda(u) \cdot (y(v) - y(u)) \;dxdt 
&=& \mathbb{E}\int_0^T\int_{\mathcal{D}}  \lambda(u) \cdot (A   y(v) - A  y(u)) \;dxdt. \label{b}
\end{eqnarray}
}To obtain (\ref{b}), we have used the fact that the operator $A$ is self-adjoint. Furthermore, we have used integration by parts,
together with the conditions (\ref{pde1}) and (\ref{starrnew1}) to obtain (\ref{a}).
Thus,  summing up (\ref{a})  and (\ref{b}), one gets
\begin{eqnarray}
 \mathbb{E}\int_0^T\int_{\mathcal{D}} 
\left( \lambda(u) \cdot \left(\frac{\partial }{\partial t}  + A \right) (y(v) - y(u)) \right)\;  dxdt
 &=&  \mathbb{E}\int_0^T\int_{\mathcal{D}}  \lambda(u) \cdot (v - u) \; dxdt\nonumber \\
 &=&\Psi(\alpha), \label{rls}
\end{eqnarray}
where we have explicitly used (\ref{pde}) in the first line of (\ref{rls}).
Hence, it follows from (\ref{condj}), (\ref{phi}) and  Theorem \ref{lions} that 
\begin{eqnarray}
  \mathcal{J}^{\prime} (u) \cdot (v-u) &=& \Psi(\alpha)  + \mathbb{E}\int_0^T\int_{\mathcal{D}} \beta  u\cdot (v-u)  \;dxdt,  \nonumber\\
  &=&  \mathbb{E}\int_0^T\int_{\mathcal{D}}  \lambda(u) \cdot (v - u) \; dxdt + 
  \mathbb{E}\int_0^T\int_{\mathcal{D}}  \beta u\cdot (v-u)  \;dxdt, \nonumber\\
   &=& \mathbb{E}\int_0^T\int_{\mathcal{D}} \left( \lambda(u) +   \beta u \right) \cdot (v - u) \; dxdt \geq 0, \label{condj2}
\end{eqnarray}
yields the desired result,
thereby completing the proof of the theorem.
\end{proof}
\begin{remark}
 Note that if we set $\mathcal{U}_{ad}=\mathcal{U},$ then (\ref{condj2}) implies that
 \[
  \lambda(u) +   \beta u =0. 
 \]
\end{remark}
\begin{remark}
 It is pertinent to point out here that  the papers \cite{CQ14, HLM11}
 prove the existence and uniqueness of the solution of SOCPs in the particular case of the steady-state diffusion equation 
 constraint, with the operator $A$  given by (\ref{diffeq}). However, ours is a generalization of 
 their result to the parabolic case, with 
 operator $A$ satisfying the assumptions $(a)$ and $(b)$ in Section \ref{probst}. Moreover,
 unlike this work,  \cite{CQ14, HLM11}  
 do not consider the inclusion of the standard deviation of the state variable
 (or any other risk measure)
 in the cost functional.
 \end{remark}

\section*{Acknowledgement}
 The work was performed while Akwum Onwunta was at the Max
Planck Institute for Dynamics of Complex Technical Systems, Magdeburg.

\bibliographystyle{siam}
\bibliography{akwumref}

\end{document}